\def\style{preprint}
\def\Z{\mathbb{Z}}
\DeclareMathOperator{\supp}{supp}
\newtheorem{theorem}{Theorem}[section]
\newtheorem*{theorem*}{Theorem}
\newtheorem{lemma}[theorem]{Lemma}
\newtheorem{corollary}[theorem]{Corollary}
\newtheorem{definition}[theorem]{Definition}
\newtheorem{proposition}[theorem]{Proposition}
\newtheorem{observation}[theorem]{Observation}
\title{The Chromatic Polynomial of a Digraph}
\author{Winfried Hochst\"attler, Johanna Wiehe \\ FernUniversit\"at in Hagen, Germany}
\date{}
\begin{document}
\ifthenelse{\equal{\style}{preprint}}{ \DMOmathsubject{05C20,05C31,05C21,05C15}
  \DMOkeywords{flow polynomial, dichromatic number, totally cyclic subdigraphs, face lattice, chromatic polynomial} \DMOtitle{063.19}{The Chromatic Polynomial of a Digraph} {Winfried Hochst\"attler, Johanna
    Wiehe}{winfried.hochstaettler@fernuni-hagen.de,
    johanna.wiehe@fernuni-hagen.de} }{}
\maketitle

\begin{abstract}
An acyclic coloring of a digraph as defined by Neumann-Lara is a vertex-coloring such that no monochromatic directed cycles occur.\\
Counting the number of such colorings with $k$ colors can be done by counting so-called Neumann-Lara-coflows (NL-coflows), which build a polynomial in $k$. We will present a representation of this polynomial using totally cyclic subdigraphs, which form a graded poset $Q$.\\
Furthermore we will decompose our NL-coflow polynomial, which becomes the chromatic polynomial of a digraph by multiplication with the number of colors to the number of components, examining the special structure of the poset of totally cyclic subdigraphs with fixed underlying undirected graph. \\
This decomposition will confirm the equality of our chromatic polynomial of a digraph and the chromatic polynomial of the underlying undirected graph in the case of symmetric digraphs.
\end{abstract}

\section{Introduction}

The notion of classic graph coloring deals with finding the smallest integer $k$ such that the vertices of an undirected graph can be colored with $k$ colors, where no two adjacent vertices share the same color. The chromatic polynomial counts those proper colorings a graph admits, subject to the number of colors. Tutte developed a dual concept \cite{tuttecon}, namely his nowhere-zero flows (NZ-flows), which build a polynomial, the flow polynomial, too.

We turn our attention to directed graphs, or digraphs for short. In 1982 Neumann-Lara \cite{neumannlara} introduced the dichromatic number of a digraph $D$ as the smallest integer $k$ such that the vertices of $D$ can be colored with $k$ colors and each color class induces an acyclic digraph. This is reasonable generalization of the chromatic number since both numbers coincide in the symmetric case, where we have all arcs in both directions.

Moreover Neumann-Lara conjectured in 1985, that every orientation of a simple planar graph can be acyclically colored with two colors \cite{neumannlaraplanar}. Regarding the dichromatic number this is not the only conjecture remaining widely open. Up to some relaxations, for instance Mohar and Li \cite{mohar_li} affirmed the two-color-conjecture for planar digraphs of digirth four, it is known \cite{bokal}, that deciding whether an arbitrary digraph has dichromatic number at most two is NP-complete. 

Although some progress has been made according thresholds (see e.g.\ \cite{erdoes_gimbel_kratsch}), even the complete case seems to be quite hard. To our knowledge it is not known how many vertices suffice to build a tournament which has dichromatic number five \cite{neumannlaratournaments}.

Nevertheless, Ellis and Soukup determined \cite{cyclereversions} thresholds for the minimum number of cycles, where reversing their orientation yields a digraph resp. tournament that has dichromatic number at most two. 

Comparing the chromatic and the dichromatic number Erd\"os and Neumann-Lara conjectured \cite{erdoes_neumannlara} in 1979 that if the dichromatic number of a class of graphs is bounded, so is their chromatic number. While Mohar and Wu \cite{mohar_wu} considered the fractional chromatic number of linear programming proving a fractional version, this is another conjecture remaining unsolved.

With our work we hope to contribute to a better understanding of the dichromatic number. Hochst\"attler \cite{hochstarticle} developed a flow theory for the dichromatic number transferring Tutte's theory of NZ-flows from classic graph colorings. Together with Altenbokum \cite{althowie} we pursued this analogy by introducing algebraic Neumann-Lara-flows (NL-flows) as well as a polynomial counting these flows. The formula we derived contains the M\"obius function of a certain poset. Here, we will derive the values of the M\"obius function by showing that the poset correlates to the face lattice of a polyhedral cone.

Probably, the chromatic polynomial of a graph is better known than the flow polynomial. Therefore, in this paper we consider the dual case of our NL-flow polynomial, the NL-coflow polynomial which equals the chromatic polynomial for the dichromatic number divided by the number of colors if the digraph is connected. We will present a representation using totally cyclic subdigraphs and decompose them to obtain an even simpler representation. In particular, it will suffice to consider certain subsets of edges of the underlying undirected graph.

Our notation is fairly standard and, if not explicitly defined, should follow the books of Bondy and Murty \cite{bondy} for digraphs and Beck and Sanyal \cite{BeckSanyal} for polyhedral geometry. Note that all our digraphs may have parallel and antiparallel arcs as well as loops if not explicitly excluded.

\section{Definitions and Tools}
Let $\mathcal{G}$ be a finite Abelian group and $D=(V,A)$ a digraph. Recall that a map \mbox{$f: A \longrightarrow \mathcal{G}$} is a flow in $D$, if it satisfies Kirchhoff's law of flow conservation
\begin{eqnarray}\label{flowcondition}
\sum_{a \in \partial^+(v)}f(a) =\sum_{a \in \partial^-(v)}f(a)
\end{eqnarray}
in every vertex $v\in V$, where $\partial^+(v)$ and $\partial^-(v)$ denote the set of outgoing resp. incoming arcs at $v$.\\
Analogously, a map $g: A \longrightarrow \mathcal{G}$ is a coflow in $D$, if it satisfies Kirchhoff's law for (weak) cycles $C \subseteq A$
\begin{eqnarray}\label{coflowcondition}
\sum_{a \in C^+}g(a)=\sum_{a \in C^-}g(a),
\end{eqnarray}
where $C^+$ and $C^-$ denote the set of arcs in $C$ that are traversed in forward resp. in backward direction.\\
Now let $n$ be the number of vertices, $m$ be the number of arcs and let $M$ denote the totally unimodular $(n \times m)$-incidence matrix of $D$. While condition (\ref{flowcondition}) is equivalent to the condition that the vector $f=(f(a_1),\ldots,f(a_m))^\top$ is an element of the null space of $M$, that is $Mf=0$, condition (\ref{coflowcondition}) is equivalent to the condition that the vector $g=(g(a_1),\ldots,g(a_m))$ is an element of the row space of $M$, that is $g=pM$, for some $(1 \times n)$-vector $p \in \mathcal{G}^{n}$. 

\begin{definition}
A digraph $D=(V,A)$ is called \emph{totally cyclic}, if every component is strongly connected. A \emph{feedback arc set} of a digraph is a set $S \subseteq A$ such that $D-S$ is acyclic. 
\end{definition}

\begin{definition}
Let $D=(V,A)$ be a digraph and $\mathcal{G}$ a finite Abelian group. An \emph{NL-$\mathcal{G}$-coflow} in $D$ is a coflow $g: A \longrightarrow \mathcal{G}$ in $D$ whose support contains a feedback arc set. For $k \in \mathbb{Z}$ and $\mathcal{G}=\Z$, a coflow $g$ is an \emph{NL-$k$-coflow}, if
\begin{equation*} 
g(a) \in \lbrace 0,\pm 1,\ldots,\pm (k-1) \rbrace \text{ , for all } a \in A,
\end{equation*}
such that its support contains a feedback arc set.
\end{definition}

In order to develop a closed formula for the number of NL-$\mathcal{G}$-coflows we use a kind of generalization of the well-known inclusion-exclusion formula, the M\"obius inversion.

\begin{definition}[see e.g.\ \cite{aigner}]
Let $(P,\leq)$ be a finite poset, then the {\em M\"obius function} is defined as follows
\begin{equation*} 
\mu: P \times P \rightarrow \mathbb{Z},\; \mu(x,y):= 
\begin{cases} 0 &\; ,\text{ if } x\nleq y \\
			  1 &\; ,\text{ if }  x=y\\
			  -\sum_{x \leq z < y} \mu(x,z) &\; ,\text{ otherwise }.
\end{cases}
\end{equation*}
\end{definition}

\begin{proposition}[see \cite{aigner}, \cite{BeckSanyal}]\label{moebius inversion}
Let $(P,\leq)$ be a finite poset, $f,g : P \longrightarrow \mathbb{K}$ functions and $\mu$ the M\"obius function. Then the following equivalence holds
\begin{equation*} 
f(x)=\sum_{y \geq x}g(y) , \text{ for all } x \in P \Longleftrightarrow g(x)=\sum_{y \geq x}\mu(x,y)f(y), \text{ for all } x \in P.
\end{equation*}
\end{proposition}
With this so called \emph{M\"obius inversion from above} it will
suffice to compute the number of $\mathcal{G}$-coflows in some given minors $B$, which is $|\mathcal{G}|^{rk(B)}$, where $rk(B)$ is the rank of the incidence matrix of $D[B]$ which equals \mbox{$|V(B)|-c(B)$}, i.e.\ the number of vertices minus the number of connected components of $D[B]$.

\section{The NL-Coflow Polynomial}

In this chapter we will define the NL-coflow polynomial, which counts the number of NL-$\mathcal{G}$-coflows, using M\"obius inversion. Therefor we need a specific partially ordered set. The following poset $(\mathcal{C},\geq)$ with
\begin{equation*}
\mathcal{C}:= \big \lbrace A /  C \; \vert \; \exists \; C_1,\ldots,C_r \text{ directed cycles, such that } C=\bigcup_{i=1}^r C_i \big \rbrace 
\end{equation*}
and
\begin{equation*}
A/\bigcup_{j\in J}C_j \geq A/\bigcup_{i \in I}C_i :\Leftrightarrow \bigcup_{j \in J} C_j \subseteq \bigcup_{i \in I} C_i,
\end{equation*} 
will serve our purpose. Note that in case $D$ is strongly connected,
$A$ is the unique minimum of this poset.

\begin{definition}\label{defcoflow}
Let $D=(V,A)$ be a digraph and $\mu$ the M\"obius function of $\mathcal{C}$. Then the {\em NL-Coflow Polynomial} of $D$ is defined as
\begin{equation*}
\psi_{NL}^D(x):= \sum_{Y \in \mathcal{C}}\mu(A,Y)x^{rk(Y)}.
\end{equation*}
\end{definition}
The dual version of Theorem 3.5 in \cite{althowie} reveals the following.

\begin{theorem}\label{NLcoflowTheo}
The number of NL-$\mathcal{G}$-coflows of a digraph $D$ depends only on the order $k$ of $\mathcal{G}$ and is given by $\psi_{NL}^D(k)$.
\end{theorem}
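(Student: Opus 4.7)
The plan is a Möbius inversion on $(\mathcal{C},\ge)$, applied to the stratification of coflows by the maximal cyclic portion of their zero set. The starting observation is purely a matter of unwinding the definition: a coflow $g:A\to G$ is NL iff its zero set $Z(g):=A\setminus\supp(g)$ contains no directed cycle, equivalently iff the maximal union $\tau(g)$ of directed cycles contained in $Z(g)$ is empty. Note that $\tau(g)$ is, by construction, one of the cyclic subsets indexing $\mathcal{C}$.

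The second ingredient is the count interpretation of $k^{rk(Y)}$. For $Y=A/C\in\mathcal{C}$, a $G$-coflow on the minor $Y$ lifts uniquely to a $G$-coflow on $D$ that vanishes on all arcs of $C$, and conversely: contraction of $C$ is exactly the linear operation that identifies vertices forced to share a potential by the condition $g|_C\equiv 0$. By the discussion at the end of Section~2, the number of such coflows equals $|G|^{rk(Y)}=k^{rk(Y)}$. In particular this count depends only on $k=|G|$ and not on the group structure, which already delivers the $G$-independence claimed in the theorem.

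With these two ingredients the Möbius inversion is immediate. Set $N(C):=|\{g:\tau(g)=C\}|$. Because $C$ is itself a union of directed cycles, the conditions $Z(g)\supseteq C$ and $\tau(g)\supseteq C$ are equivalent, so decomposing by the value of $\tau(g)$ yields
\begin{equation*}
k^{rk(A/C)} \;=\; |\{g : Z(g)\supseteq C\}| \;=\; \sum_{C'\supseteq C,\;C'\text{ cyclic}} N(C').
\end{equation*}
Under the identification of cyclic subsets with elements of $\mathcal{C}$, this is exactly the hypothesis of Proposition~\ref{möbius inversion}, and inverting followed by evaluating at the minimum $A$ (which corresponds to $C=\emptyset$) gives
\begin{equation*}
N(\emptyset) \;=\; \sum_{Y\in\mathcal{C}} \mu(A,Y)\,k^{rk(Y)} \;=\; \psi_{NL}^D(k).
\end{equation*}
Since $N(\emptyset)$ is by definition the number of coflows with acyclic zero set, i.e.\ the number of NL-$G$-coflows, the theorem follows.

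The one step that needs genuine care is the equivalence $Z(g)\supseteq C\Leftrightarrow\tau(g)\supseteq C$, which uses essentially that $C$ is a union of directed cycles, together with the correct matching between the order on $\mathcal{C}$ and the containment order on cyclic subsets so that the Möbius sums range over the intended principal filters. Everything else, including the final substitution $C=\emptyset$ and the deduction that the count depends only on $|G|$, is then routine.
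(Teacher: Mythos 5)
Your proposal is correct and follows essentially the same route as the paper: a M\"obius inversion over $\mathcal{C}$ obtained by stratifying coflows according to the maximal union of directed cycles contained in the zero set, with $k^{rk(A/C)}$ counting the coflows vanishing on $C$. The only (cosmetic) difference is that you work with coflows on $D$ that vanish on $C$ rather than with coflows on the contracted minors $D[A/C]$, which are identified by the standard correspondence the paper also relies on.
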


\begin{proof}
Using Proposition \ref{moebius inversion} with 
$f_k,g_k: \mathcal{C} \rightarrow \Z$, such that $f_k(Y)$ indicates all $\mathcal{G}$-coflows
and $g_k(Y)$ all NL-$\mathcal{G}$-coflows in $D[Y]$,
it suffices to show that
\begin{eqnarray}\label{moebiusvor}
f_k(Z)=\sum_{\substack{Y \leq Z\\Y \in \mathcal{C}}}g_k(Y)
\end{eqnarray}
holds for all $Z \in \mathcal{C}$. Then we obtain
\begin{eqnarray*}
\psi_{NL}^D(k)=g_k(A) = \sum_{\substack{Y \leq A\\Y\in \mathcal{C}}} \mu(A,Y)f_k(Y)
= \sum_{Y \in \mathcal{C}}\mu(A,Y)k^{rk(Y)},
\end{eqnarray*}
since the number of $\mathcal{G}$-coflows on $D[Y]$ is given by $k^{rk(Y)}$.\\
Concerning (\ref{moebiusvor}) let $Z \in \mathcal{C}$ and $\varphi$ be a $\mathcal{G}$-coflow on $D[Z]$. With $d$ we denote the number of directed cycles in $D[Z]$ and set
\begin{equation*}
Y := Z / \bigcup_{i=1}^d \left\lbrace C_i \; \vert \; C_i \text{ is a directed cycle in }D[Z]\text{ and } \forall c \in C_i : \varphi(c)=0 \right\rbrace.
\end{equation*}
Then clearly $Y \in \mathcal{C}$ and $\varphi|_{Y}$ is an NL-$\mathcal{G}$-coflow in $D[Y]$. \\
The other direction is obvious since every NL-$\mathcal{G}$-coflow $g$ on $D[Y]$ with $Y \in \mathcal{C}$ can be extended to a $\mathcal{G}$-coflow $\tilde{g}$ on $D[Z]$, setting $\tilde{g}(a):= 0_{\mathcal{G}}$ for all $a \in Z\setminus Y$. 
\end{proof}

\subsection{Totally Cyclic Subdigraphs}

Since many unions of directed cycles determine the same strongly connected subdigraph it suffices to consider all totally cyclic subdigraphs. Those form a graded poset and the M\"obius function simply alternates.

\begin{lemma}\label{lemma:cone}
Let $D=(V,A)$ be a digraph. The poset
$$ Q:=\lbrace B \subseteq A: D[B] \text{ is totally cyclic subdigraph of } D\rbrace,$$
ordered by inclusion is a graded lattice with rank function $rk_Q$ and its M\"obius function alternates in the following fashion:
$$ \mu_Q(\emptyset,B)=(-1)^{rk_Q(B)}.$$
\end{lemma}

\begin{proof}
Let $M$ be the totally unimodular $(n\times m)-$incidence matrix of $D$. Then 
$$\begin{pmatrix}
M\\-M\\-Id_{m}
\end{pmatrix}x\leq 0$$
describes a polyhedral cone $C$. Since $M$ is totally unimodular all extreme rays are spanned by integral points. \\
Let $\mathcal{F}(C)$ be the face lattice of $C$. By $\mathbf{0}$ we denote the $m-$dimensional all-zero-vector, i.e.\ the only vertex of $C$ and define an order isomorphism between $Q$ and $\mathcal{F}(C)\setminus \lbrace \emptyset\rbrace$. For a nonempty face $F$ of $C$ we set 
$$B_F:= \supp(x)=\lbrace 1\leq i \leq m: x_i>0\rbrace$$ 
for some $x\in F$. Note that $B_\mathbf{0}=\emptyset$. Since $Mx=0$ Kirchhoff's flow condition is fulfilled and $x$ is a NZ-flow on $D[B_F]$.  Thus $D[B_F]$ is totally cyclic. For the other direction take a totally cyclic subdigraph $D[B]$ on $B\subseteq A$. Then there exists a NZ-flow $x$ on $D[B]$, for instance by assigning $x_a$ to the number of directed cycles $a\in B$ is in. By assigning the remaining arcs to zero we extend $x$ to $A$ and set
$$F_B:=\lbrace y: My=0, \supp(y)=\supp(x)\rbrace=\lbrace y: My=0, y_i=0 \Leftrightarrow x_i\notin B, \forall i \rbrace.$$
Note that $F_\emptyset=\mathbf{0}$. Clearly, $F_B$ is a nonempty face of $C$.\\
Now let $F,G$ be nonempty faces of $C$ and $x\in F, y\in G$. Then $F\subseteq G$ if and only if $\supp(x)\subseteq \supp(y)$, thus the poset $Q$ is isomorphic to $\mathcal{F}(C)\setminus \lbrace \emptyset \rbrace$ and 
$$rk_Q(B)=\begin{cases} 0 &\text{ , if } B=\emptyset\\ rk_{\mathcal{F}}(F_B)-1 &\text{ , otherwise}.\end{cases}$$ 
Theorem 3.5.1 and Corollary 3.3.3 in \cite{BeckSanyal} yield for $Y\subseteq X\in Q$
\begin{flalign*}
\mu_Q(Y,X)&=\mu_{C}(F_Y,F_X)=(-1)^{\dim F_X- \dim F_Y}\\
&=(-1)^{rk_C(F_X)-1-(rk_C(F_Y)-1)}=(-1)^{rk_Q(X)-rk_Q(Y)},
\end{flalign*}
and in particular 
$$ \mu_Q(\emptyset,X)=(-1)^{rk_Q(X)}.$$
\end{proof}

The \emph{corank}, also known as \emph{circuit rank}, of a graph is the number of independent cycles, i.e.\ the number of edges minus the number of vertices plus the number of connected components. The corank of a graph $G=(V,E)$ coincides with the number of edges that have to be removed to break all cycles and is denoted by $cr(G)$. For a subset $X\subseteq E$ of edges we denote by $cr(X)$ the corank of $G[X]$. For a set of arcs $X\subseteq A$ of a digraph $D=(V,A)$ we denote by $\underline{X}$ the set of edges of the underlying undirected graph $G[D[X]]$.

\begin{lemma}\label{rk_Q}
Let $X\in Q$ and $\delta(X)$ the number of digons in $D[X]$, then
$$ rk_Q(X)=cr(\underline{X})+\delta(X).$$
\end{lemma}

\begin{proof}
We proceed by induction over $rk_Q(X)$. If $rk_Q(X)=0$ then $X=\emptyset$ and $cr(\emptyset)+\delta(\emptyset)=0$. If there are no digons $D[X]$ is a bridgeless totally cyclic orientation of $G[\underline{X}]$. As the corank equals the number of independent cycles $rk_Q(X)=cr(\underline{X})$. Now let $rk_Q(X)>0$ and $d \in \underline{X}$ a digon in $D[X]$. This yields the following two cases:
\begin{enumerate}
\item[1.] $d$ is a bridge in $G[\underline{X}]$. Then 
$$rk_Q(X)=rk_Q(X\setminus \lbrace d \rbrace)+1\overset{IH}{=}cr(\underline{X\setminus \lbrace d\rbrace})+\delta(X\setminus \lbrace d \rbrace)+1=cr(\underline{X})+\delta(X).$$
\item[2.] $d$ is not a bridge in $G[\underline{X}]$. Then there exists an arc $\overrightarrow{d}\in X$ such that $X\setminus\lbrace \overrightarrow{d}\rbrace\in Q$ and
$$rk_Q(X)=rk_Q(X\setminus \lbrace \overrightarrow{d} \rbrace)+1\overset{IH}{=}cr(\underline{X\setminus \lbrace \overrightarrow{d} \rbrace})+\delta(X\setminus \lbrace \overrightarrow{d} \rbrace)+1=cr(\underline{X})+\delta(X).$$
\end{enumerate}
\end{proof}

The following is immediate from the previous two lemmas:

\begin{corollary}\label{Cor:NL-coflow}
$$\psi_{NL}^D(k)= \sum_{X\in Q} (-1)^{cr(\underline{X})+\delta(X)}k^{rk(D/X)}.$$
\end{corollary}

\section{Decomposing the NL-Coflow Polynomial}

In this section we will examine the special structure of the poset of totally cyclic subdigraphs with a fixed underlying undirected graph. This will lead to a more compact representation of the NL-coflow polynomial and therefore also of the chromatic polynomial of a digraph.\\

Let $D=(V,A)$ be a digraph. For a subset of edges $Y\subseteq E(G[D])$ we denote
$$ Q(Y):=\lbrace X\in Q: \underline{X}=Y\rbrace$$
the elements of $Q$ with fixed support $Y$. Note that $Q(Y)$ does not need to be a lattice. However, once $Y$ is fixed there is a unique maximal element $\overline{Y}\in Q(Y)$ with $X\subseteq \overline{Y}$ for all $X\in Q(Y)$, namely 
$$\overline{Y}:=\bigvee_{\substack{X\in Q(Y)}}X.$$
Since there can be several totally cyclic orientations of $Y$ a unique minimal element does not necessarily exist. By $\mathcal{O}(Y)$ we denote the set of inclusionwise minimal totally cyclic orientations of $Y$. Note that the only digons in $\mathcal{O}(Y)$ are bridges in $G[D[Y]]$.

A first observation shows that a bridge in the underlying undirected graph increases the rank of a given totally cyclic subdigraph by exactly one. Let $\beta(Y)$ denote the set of bridges in $G[Y]$.

\begin{observation}\label{obs:bridges}
Let $D=(V,A)$ be a digraph and $G=(V,E)$ its underlying undirected graph and $Y\subseteq E$. Then
$$ \sum_{\substack{ X \in Q(Y)}} \mu_Q(\emptyset,X)=(-1)^{|\beta(Y)|}\sum_{\substack{X \in Q(Y\setminus \beta(Y)) }} \mu_Q(\emptyset,X).$$
\end{observation}

\begin{proof}
If there are no bridges in $G[Y]$ there is nothing to prove. Now, let $\overrightarrow{b},\overleftarrow{b}\in A$ be a digon such that its underlying edge $b\in Y\subseteq E$ is a bridge in $G$. If there exist other arcs in $D$ with underlying edge $b$ we can reduce this case to the case where $\overrightarrow{b},\overleftarrow{b}$ are the only arcs on $b$ as follows. Removing any arc $a\in A\setminus \lbrace \overrightarrow{b},\overleftarrow{b} \rbrace$ with underlying edge $b$ yields a totally cyclic subdigraph with $rk_Q(X\setminus \lbrace a\rbrace)=rk_Q(X)-1$ for $X\in Q(Y)$. Without loss of generality let $a$ be parallel to $\overrightarrow{b}$. Then also $D[X\setminus \lbrace\overrightarrow{b}\rbrace]$ is totally cyclic and $rk_Q(X\setminus \lbrace \overrightarrow{b}\rbrace)=rk_Q(X)-1$. Hence
$$ \sum_{X\in Q(Y)}\mu_Q(\emptyset,X)=\sum_{\substack{X\in Q(Y)\\ a\notin X}}\mu_Q(\emptyset,X).$$
Now suppose $\overrightarrow{b},\overleftarrow{b}$ are the only arcs on $b$. Then neither $D[Y]-\overrightarrow{b}$ nor $D[Y]-\overleftarrow{b}$ is totally cyclic. Thus
\begin{flalign*}
  \mu_Q(\emptyset,X)= (-1)^{rk_Q(X)}= (-1)^{rk_Q(X\setminus \lbrace \overrightarrow{b},\overleftarrow{b} \rbrace)+1}=- \mu_Q(\emptyset,X)
\end{flalign*}
holds for every $X\in Q(Y)$. Furthermore $X,Z\in Q(Y)$ if and only if \mbox{$X\setminus \lbrace \overrightarrow{b},\overleftarrow{b}\rbrace$}, $Z\setminus \lbrace \overrightarrow{b},\overleftarrow{b}\rbrace \in Q(Y\setminus \lbrace b\rbrace)$ and 
$$X\subseteq Z \Leftrightarrow X\setminus \lbrace \overrightarrow{b},\overleftarrow{b}\rbrace \subseteq Z\setminus \lbrace \overrightarrow{b},\overleftarrow{b}\rbrace.$$
\end{proof}

For now we can neglect the existence of bridges.

\begin{observation}\label{obs:above}
Let $D=(V,A)$ be a digraph and let $\emptyset \neq Y\subseteq E(D)$ such that $G[Y]$ has no bridges. For any $Z\in Q(Y)\setminus \lbrace \overline{Y} \rbrace$ 
$$ \sum_{\substack{X\in Q(Y)\\X\supseteq Z}}\mu_Q(\emptyset,X)=0$$
holds.
\end{observation}

\begin{proof}
Let $d:=|\overline{Y}\setminus Z|$. Then $d>0$ and since $G[Y]$ is bridgeless
$$ \sum_{\substack{X\in Q(Y)\\X\supseteq Z}}\mu_Q(\emptyset,X)=\sum_{i=0}^{d}\binom{d}{i}(-1)^{rk_Q(Z)+i}=0.$$
\end{proof}

Using the principle of inclusion and exclusion we can exploit this observation in order to compress some parts of $Q$.

\begin{observation}\label{atomsum}
Let $D=(V,A)$ be a digraph and $\emptyset \neq Y\subseteq E(D)$ such that $G[Y]$ is bridgeless. Let $\mathcal{O}(Y)=\lbrace Z_1,\ldots,Z_k \rbrace$. If $\bigvee_{i\in I}Z_i\neq \overline{Y}$ for any $\emptyset \neq I\subseteq [k]$, then
$$ \sum_{\substack{X\in Q(Y)\\ X\supseteq Z_i\\ i\in [k]}}\mu_Q(\emptyset,X)=0.$$
\end{observation}

\begin{proof}
By the principle of inclusion and exclusion and by Observation \ref{obs:above}
$$\sum_{\substack{X \in Q(Y)\\ X\supseteq Z_i\\ i\in [k]}}\mu_Q(\emptyset,X)= \sum_{\emptyset\neq I \subseteq [k]}(-1)^{|I|+1}\sum_{\substack{X\in Q(Y)\\ X\supseteq Z_I}}\mu_Q(\emptyset,X)=0.$$
\end{proof}

\begin{lemma}\label{zero}
Let $D=(V,A)$ be a digraph such that its underlying undirected graph is bridgeless and let $\emptyset\neq Z\in Q$. If there exists a digon $\overrightarrow{a},\overleftarrow{a}$ in $D[Z]$ such that either $D[Z]-\overrightarrow{a}$ or $D[Z]-\overleftarrow{a}$ is not totally cyclic, then
$$ \sum_{\substack{X\in Q(\underline{Z})\\ X\subseteq Z}}\mu_Q(\emptyset,X)=0.$$
\end{lemma} 

\begin{proof}
Without loss of generality let $D[Z]-\overrightarrow{a}$ be not totally cyclic. Then $\overrightarrow{a}\in X$ for all $X\in Q(\underline{Z})$ with $X\subseteq Z$. Let $\mathcal{O}(\underline{Z})=\lbrace Z_1,\ldots,Z_k \rbrace$. For $\emptyset \neq I\subseteq [k]$ we set $Z_I:=\bigvee_{i\in I}Z_i$. Then $\overleftarrow{a}\notin Z_I$ for every $I\subseteq [k]$ but $\overleftarrow{a}\in Z$ and thus $Z_I \neq Z$. By Observation \ref{atomsum}
$$ \sum_{\substack{X\in Q(\underline{Z})\\ X\subseteq Z}}\mu_Q(\emptyset,X)=\sum_{\substack{X \in Q(\underline{Z})\\ X\supseteq Z_i\\ i\in [k]}}\mu_Q(\emptyset,X)= 0.$$
\end{proof}

By $S(D)\subseteq A$ we denote the \emph{symmetric part} of a digraph $D=(V,A)$, that is the arcset of the maximal symmetric subdigraph of $D$. The following result shows that if the non-symmetric part is totally cyclic, the sum over all elements with fixed support reduces to $\pm 1$.

\begin{theorem}\label{symmetric}
Let $D=(V,A)$ be a digraph, $G=(V,E)$ its underlying undirected graph and $\emptyset \neq Y\subseteq E$. If $D[Y]-S(D[Y])$ is totally cyclic, then
$$ \sum_{\substack{X\in Q(Y)}}\mu_Q(\emptyset,X)=(-1)^{cr(Y\setminus \underline{S(D[Y])})+\delta(\overline{Y})}.$$
\end{theorem}

\begin{proof}
We proceed by induction on $\delta(\overline{Y})$. If $\overline{Y}$ has no digons Lemma \ref{rk_Q} yields
$$\sum_{\substack{X\in Q(Y)}}\mu_Q(\emptyset,X)=\mu_Q(\emptyset,\overline{Y})=(-1)^{cr(Y)}.$$
Now let $\delta(\overline{Y})>0$ and $\overrightarrow{a},\overleftarrow{a}\in \overline{Y}$ be a digon such that its underlying edge $a\in Y$ is not a bridge in $G[Y]$. If all the digons are bridges we can directly apply Observation \ref{obs:bridges} proving the claim. \\
Since $a\in S(D[Y])$ and by assumption both $D[Y]-S(D[Y])+\overleftarrow{a}$ and $D[Y]-S(D[Y])+\overrightarrow{a}$ are totally cyclic, so are $D[\overline{Y}]-\overrightarrow{a}$ and $D[\overline{Y}]-\overleftarrow{a}$. We decompose the elements $X\in Q(Y)$ into two parts:
$$\overrightarrow{X}:=\lbrace X\in Q(Y): \exists x\in \mathcal{O}(Y):x\subseteq X, \overrightarrow{a}\in x\rbrace$$
and
$$\overleftarrow{X}:=\lbrace X\in Q(Y): \exists x\in \mathcal{O}(Y):x\subseteq X, \overleftarrow{a}\in x\rbrace.$$
As a result
$$ \sum_{\substack{X\in Q(Y)}}\mu_Q(\emptyset,X)=\sum_{X\in \overrightarrow{X}}\mu_Q(\emptyset,X)+\sum_{X\in \overleftarrow{X}}\mu_Q(\emptyset,X)-\sum_{\substack{X\in \overrightarrow{X}\cap \overleftarrow{X}}}\mu_Q(\emptyset,X)$$
and by Observation \ref{atomsum} the first two terms equal zero.\\
Now we will prove that the poset $(\overrightarrow{X}\cap \overleftarrow{X},\subseteq)$ is isomorphic to $(Q(Y\setminus \lbrace a \rbrace),\subseteq)$.\\
Let $X,Z\in \overrightarrow{X}\cap \overleftarrow{X}$. Then both $D[X]- \overrightarrow{a}$ and $D[X]-\overleftarrow{a}$ are totally cyclic and so $X\setminus \lbrace \overrightarrow{a},\overleftarrow{a}\rbrace, Z\setminus \lbrace \overrightarrow{a},\overleftarrow{a} \rbrace\in  Q(Y\setminus \lbrace a \rbrace)$. Clearly, 
$$ X\subseteq Z \Leftrightarrow X\setminus \lbrace \overrightarrow{a},\overleftarrow{a}\rbrace \subseteq Z\setminus \lbrace \overrightarrow{a},\overleftarrow{a}\rbrace$$
and since $a$ is not a bridge and $\overrightarrow{a},\overleftarrow{a}\in X$
$$ rk_Q(X)=rk_Q(X\setminus \lbrace \overrightarrow{a},\overleftarrow{a}\rbrace)+2.$$
Using the induction hypothesis we find
$$\sum_{\substack{X\in \overrightarrow{X}\cap \overleftarrow{X}}}\mu_Q(\emptyset,X)=\sum_{\substack{X\in Q(Y\setminus \lbrace a\rbrace)}}\mu_Q(\emptyset,X)= (-1)^{cr(Y\setminus \underline{S(D[Y])})+\delta(\overline{Y})-1}$$
proving the theorem.
\end{proof}

Putting the two previous results together we find a new representation of the NL-coflow polynomial:

\begin{theorem}
Let $D=(V,A)$ be a digraph, $G=(V,E)$ its underlying undirected graph and 
$$Q^\prime:=\lbrace Y\subseteq E: D[Y] \text{ and } D[Y]-S(D[Y]) \text{ are totally cyclic}\rbrace.$$ 
Then
$$ \psi_{NL}(D,x)=\sum_{Y\in Q^\prime}(-1)^{cr(Y\setminus \underline{S(D[Y])})+\delta(\overline{Y})}x^{rk(D/Y)}.$$
\end{theorem}

\begin{proof}
Let $Y\subseteq E$ such that $D[Y]$ is totally cyclic but $D[Y]-S(D[Y])$ is not. Then there exists a digon $\overrightarrow{a},\overleftarrow{a}$ in $D[Y]$ such that either $D[Y]-\overrightarrow{a}$ or $D[Y]-\overleftarrow{a}$ is not totally cyclic. By Lemma \ref{zero} and Theorem \ref{symmetric} 
\begin{flalign*}
 \psi_{NL}(D,x)&=\sum_{Y\subseteq E}\sum_{\substack{X\in Q(Y)}}\mu_Q(\emptyset,X)x^{rk(D/X)}\\
 &=\sum_{Y\in Q^\prime}\sum_{\substack{X\in Q(Y)}}\mu_Q(\emptyset,X)x^{rk(D/X)}\\
 &= \sum_{Y\in Q^\prime}(-1)^{cr(Y\setminus \underline{S(D[Y])})+\delta(\overline{Y})}x^{rk(D/Y)}.
 \end{flalign*}
\end{proof}

\section{Symmetric Digraphs}

Considering symmetric digraphs $D=(V,A)$, it is obvious that the NL-coflow polynomial equals the chromatic polynomial $\chi(G,x)$ of the underlying undirected graph $G=(V,E)$ divided by the number of colors since both polynomials count the same objects. We will present an alternative proof of this fact, where the chromatic polynomial is represented by (see \cite{bondy})
\begin{equation*}
\chi (G,x)= \sum_{Y \subseteq E} (-1)^{|Y|}x^{\tilde{c}(Y)},
\end{equation*}
where $\tilde{c}(Y)$ denotes the number of components in the spanning subgraph $(V,Y)$.

\begin{corollary}
Let $D=(V,A)$ be a symmetric digraph, $G=(V,E)$ its underlying undirected graph and $\emptyset \neq Y\subseteq E$. Then:
$$ \sum_{\substack{X \in Q(Y)}}\mu_Q(\emptyset,X)=(-1)^{|Y|}.$$
\end{corollary}

\begin{proof}
Since the empty graph is totally cyclic Theorem \ref{symmetric} yields
$$ \sum_{\substack{X\in Q(Y)}}\mu_Q(\emptyset,X)= (-1)^{cr(\emptyset)+|\overline{Y}|}=(-1)^{|Y|}.$$
\end{proof}

\begin{corollary}
Let $D=(V,A)$ be a symmetric digraph and $G=(V,E)$ its underlying undirected graph. Then the following holds
\begin{equation*}
\psi_{NL}(D,x)= \chi(G,x)\cdot x^{-c(G)}.
\end{equation*} 
\end{corollary}

\begin{proof}
The previous Corollary yields
\begin{flalign*}
\psi_{NL}(D,x)&=\sum_{\emptyset \neq Y\subseteq E}\sum_{\substack{X\in Q(Y)}}\mu_Q(\emptyset,X)x^{rk(D/Y)}+\mu_Q(\emptyset,\emptyset)x^{rk(D)}\\
&=\sum_{\emptyset \neq Y\subseteq E}(-1)^{|Y|}x^{rk(D/Y)}+x^{rk(D)}.
\end{flalign*}
Since contracting does not change the number of components
\begin{flalign*}
rk(D/Y)&=|V(D/Y)|-c(D/Y)\\
&=c(D[Y])+|V(D)|-|V(Y)|-c(D/Y)=\tilde{c}(D[Y])-c(D)
\end{flalign*}
completes the proof.
\end{proof}

\bibliography{NL-flow}{}
\bibliographystyle{siam}

\end{document}